\documentclass[11 pt]{amsart}

\usepackage{lineno,hyperref}
\usepackage{amsmath,amssymb}
\usepackage{lineno}
\usepackage{centernot}
\theoremstyle{plain}
\newtheorem{thm}{Theorem}[section]
\newtheorem{lemma}[thm]{Lemma}
\newtheorem{cor}[thm]{Corollary}
\newtheorem{prop}[thm]{Proposition}

\theoremstyle{definition}
\newtheorem{defn}[thm]{Definition}
\newtheorem{example}[thm]{Example}

\numberwithin{equation}{section}

\newcommand{\interior}[1]{{\kern0pt#1}^{\mathrm{o}}}











\begin{document}


\title{LNZS rings}

\author[Sanjiv Subba]{Sanjiv Subba $^\dagger$}

\address{$^\dagger$Department of Mathematics\\ NIT Meghalaya\\ Shillong 793003\\ India}
\email{sanjivsubba59@gmail.com}
\author[Tikaram Subedi]{Tikaram Subedi  {$^{\dagger *}$}}
\address{$^\dagger$Department of Mathematics\\ NIT Meghalaya\\ Shillong 793003\\ India}
\email{tikaram.subedi@nitm.ac.in}

\subjclass[2010]{Primary 16U80; Secondary 16S34, 16S36}.


\begin{abstract}
	This paper introduces a class of rings called \textit{left nil zero semicommutative} rings ( \textit{LNZS} rings ), wherein a ring $R$ is said to be LNZS if the left annihilator of every nilpotent element of $R$ is an ideal of $R$. It is observed that reduced rings are LNZS but not the other way around. So, this paper provides some conditions for an LNZS ring  to be reduced, and   among other results, it is proved that $R$ is reduced if and only if $T_2(R)$ is LNZS. Furthermore, it is shown through examples
	that neither the polynomial ring  nor the skew polynomial ring over an LNZS is LNZS. Therefore, this paper investigates the LNZS property of the polynomial extension and skew polynomial extension of an LNZS ring with some additional conditions.

\noindent \textbf{Keywords:} LNZS rings, Nilpotent elements, Semicommutative rings, Reduced rings.
\end{abstract}

\maketitle

\section{Introduction}
\label{intro}
 Semicommutative rings and their generalizations play an important role in non commutative ring theory. Different authors over the last several years have studied the extensions of semicommutative rings using different tools and strategies (see \cite{n2}, \cite{weaklysemi}, \cite{csemi} and \cite{wsem}). In a semicommutative ring $R$, the left annihilator of every element of $R$ is an ideal.  Therefore, it is of interest to see how a ring behaves if this property is satisfied by some elements of the ring and not necessarily every element of the ring. In this context,   this paper studies rings in which the left annihilator of every nilpotent element is an ideal.
  
   \quad All rings considered in this paper are associative with identity unless otherwise mentioned. $R$ represents a ring, and all modules are unital. The symbols $Z(R),~E(R),~ J(R),~N(R)$, and  $T_n(R)$ respectively denote the set of all central elements of $R$, the set of all idempotent elements of $R$, the Jacobson radical of $R$, the set of all nilpotent elements of $R$, and the ring of upper triangular matrices of order $n\times n$ over $R$. The notations $l(x)$ stands for the left annihilator of an element $x$ of $R$.
   
    \quad Let $ME_l(R)=\{e\in E(R)~|~Re$~ is~ a~minimal~left~ideal~of~$R\}$. An element $e\in E(R)$ is said to be \textit{left (right) semicentral} if $re=ere~(er=ere)$ for all $r\in R$. Following \cite{qn}, $R$ is called \textit{left min-abel} if every element of $ME_l(R)$ is left semicentral in $R$. $R$ is called left $MC2$ if $aRe=0$ implies $eRa=0$ for any $a\in R,~e\in ME_l(R)$. $R$ is said to be $NI$ if $N(R)$ is an ideal of $R$. Following \cite{nili}, a left $R$-module $M$ is called $Wnil-injective$ if for any $w~(\neq 0)\in N(R)$, there exists a positive integer $m$ such that $w^m\neq 0 $ and any left $R$-homomorphism $f:Rw^m\rightarrow M$ extends to one from $R$ to $M$. Now, recall that $R$ is said to be:
    \begin{enumerate}
     \item \textit{reduced} if $N(R)=0$.
     \item \textit{reversible} (\cite{ext}) if $wh=0$ implies $hw=0$ for any $w,~h\in R$.
     \item  \textit{semicommutative} (\cite{weaklysemi}) if $wh=0$ implies $wRh=0$ for any $w,~h\in R$.
     \item \textit{weakly semicommutative} (\cite{weaklysemi}) if for any $w,h\in R$ satisfy $wh=0$ then $wrh\in N(R)$ for any $r\in R$.
     \item \textit{abelian} if $E(R)\subseteq Z(R)$.
     \end{enumerate}
\section{LNZS rings}
\label{sec:1}
\begin{defn}\label{def11}
We call a ring $R$ left nil zero semicommutative ( LNZS ) if $l(a)$ is an ideal for any $a\in N(R)$.
\end{defn}
Clearly, semicommutative rings are LNZS. Nevertheless, not every LNZS ring is semicommutative; for example, take $R=T_2(\mathbb{Z}_2)$. It is easy to see that $R$ is not semicommutative, whereas $R$ is LNZS in view of the Proposition \ref{tri}.

\begin{thm}\label{tri}
$T_2(R)$ is LNZS if and only if $R$ is reduced.
\end{thm}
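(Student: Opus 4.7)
The plan is to prove the two implications separately.

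For the backward direction, assume $R$ is reduced. Since the diagonal entries of any nilpotent element of $T_2(R)$ must themselves be nilpotent in $R$ and hence zero, the set $N(T_2(R))$ is exactly $\left\{\begin{pmatrix} 0 & b \\ 0 & 0\end{pmatrix} : b \in R\right\}$. Fixing such an $A$, a direct calculation shows that $B=\begin{pmatrix} b_1 & b_2 \\ 0 & b_3\end{pmatrix}\in l(A)$ iff $b_1 b=0$ in $R$. Reduced rings are semicommutative (a standard fact: $b_1 b=0$ gives $(bb_1)^2=0$ hence $bb_1=0$, and then $(b_1 r b)^2 = b_1 r(bb_1)rb = 0$, so $b_1 r b = 0$), so $b_1 c_1 b=0$ for every $c_1\in R$, and hence $BC\in l(A)$ for every $C\in T_2(R)$. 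This makes $l(A)$ a two-sided ideal, so $T_2(R)$ is LNZS.

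For the forward direction, I would argue the contrapositive: if $R$ is not reduced, I produce a nilpotent $A\in T_2(R)$ whose left annihilator is not a right ideal. First, pick a nonzero nilpotent element in $R$ and raise it to an appropriate power to obtain $0\neq a\in R$ with $a^2=0$. My candidate is $A=\begin{pmatrix} 0 & 1 \\ 0 & a\end{pmatrix}$, for which one checks $A^3=0$. Then $B=\begin{pmatrix} -a & 1 \\ 0 & 0\end{pmatrix}$ satisfies $BA=0$, placing $B$ in $l(A)$, while with $C=\begin{pmatrix} 1 & 0 \\ 0 & 0\end{pmatrix}$ a direct computation yields $BCA=\begin{pmatrix} 0 & -a \\ 0 & 0\end{pmatrix}\neq 0$. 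Thus $BC\notin l(A)$, so $l(A)$ is not a right ideal and $T_2(R)$ is not LNZS.

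The main obstacle is locating a suitable witness matrix $A$ in the forward direction. Naive choices such as $A=\begin{pmatrix} 0 & a \\ 0 & 0\end{pmatrix}$ or $A=\begin{pmatrix} a & 1 \\ 0 & 0\end{pmatrix}$ lead to annihilators that turn out to be two-sided ideals for many non-reduced rings, and so cannot witness failure of LNZS in general. Placing $a$ in the $(2,2)$ position together with a $1$ in the $(1,2)$ position couples the two entries of the first row of $B$ into a single linear condition $b_1+b_2 a=0$, and this coupling is precisely what is destroyed by right multiplication by the matrix unit $e_{11}$, which is what makes the construction work.
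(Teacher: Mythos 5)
Your proof is correct and follows essentially the same strategy as the paper: the backward direction identifies $N(T_2(R))$ with the strictly upper triangular matrices and invokes semicommutativity of reduced rings, and the forward direction exhibits an explicit nilpotent matrix built from a square-zero element $a$ whose left annihilator fails to be a right ideal. Your witness $\begin{pmatrix} 0 & 1 \\ 0 & a\end{pmatrix}$ (cube-zero) differs from the paper's square-zero choice $\begin{pmatrix} a & a+1 \\ 0 & -a\end{pmatrix}$, but the computations are equally direct and both are valid.
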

\begin{proof}
Suppose $T_2(R)$ be an LNZS ring. On the contrary, we assume that $R$ is not reduced. Then $x^2=0$ for some $x~(\neq 0)\in R$. So $\left[\begin{array}{rr}
x & x+1 \\
0 & -x
\end{array} \right]\in N(T_2(R))$. Observe that $\left[\begin{array}{rr}
x & 1 \\
0 & x
\end{array} \right]\left[\begin{array}{rr}
x & x+1 \\
0 & -x
\end{array} \right]=\left[\begin{array}{rr}
0 & 0 \\
0 & 0
\end{array} \right]$. Since $T_2(R)$ is LNZI, $\left[\begin{array}{rr}
x & 1 \\
0 & x
\end{array} \right]\left[\begin{array}{rr}
1 & 1 \\
0 & 0
\end{array} \right]\left[\begin{array}{rr}
x & x+1 \\
0 & -x
\end{array} \right]=\left[\begin{array}{rr}
0 & 0 \\
0 & 0
\end{array} \right]$, that is, $\left[\begin{array}{rr}
0 & x \\
0 & 0
\end{array} \right]=\left[\begin{array}{rr}
0 & 0 \\
0 & 0
\end{array} \right]$. This implies that $x=0$, a contradiction. Therefore $R$ is reduced.\\
Conversely, assume that $R$ is reduced. Then $N(T_2(R))=\left[\begin{array}{rr}
0 & R \\
0 & 0
\end{array} \right]$. Let $\left[\begin{array}{rr}
x & y \\
0 & z
\end{array} \right]\in T_2(R)$, $\left[\begin{array}{rr}
0 & a \\
0 & 0
\end{array} \right]\in N(T_2(R))$ be such that $\left[\begin{array}{rr}
x & y \\
0 & z
\end{array} \right]\left[\begin{array}{rr}
0 & a \\
0 & 0
\end{array} \right]=\left[\begin{array}{rr}
0 & 0 \\
0 & 0
\end{array} \right]$, that is, $xa=0$. Since $R$ is reduced, $xha=0$ for any $h\in R$. For any arbitrary element $\left[\begin{array}{rr}
s & t \\
0 & w
\end{array} \right]\in T_2(R)$, we have $\left[\begin{array}{rr}
x & y \\
0 & z
\end{array} \right]\left[\begin{array}{rr}
s & t \\
0 & w
\end{array} \right]\left[\begin{array}{rr}
0 & a \\
0 & 0
\end{array} \right]=\left[\begin{array}{rr}
0 & xsa \\
0 & 0
\end{array} \right]=\left[\begin{array}{rr}
0 & 0 \\
0 & 0
\end{array} \right]$. Thus, $l\left(\left[\begin{array}{rr}
0 & a \\
0 & 0
\end{array} \right]\right) $ is an ideal. So $T_2(R)$ is LNZS.
\end{proof}
Proposition \ref{tri} cannot be extended to $T_n(R)$ for any integer $n\geq 3$ ( see the following example ).
\begin{example}\label{T3}
Take the ring  $S=T_3(R)$, where $R$ denotes a non zero reduced ring. Observe that $e_{23}\in N(T_3(R))$ and $e_{11}e_{23}=0$ but $e_{11}e_{12}e_{23}\neq 0$, that is, $l(e_{23})$ is not an ideal. So $T_3(R)$ is not LNZS. Hence, for any non zero reduced ring $R$, $T_n(R)$ is not LNZS for $n\geq 3$.
\end{example}
\begin{prop}\label{ni}
LNZS rings are $NI$.
\end{prop}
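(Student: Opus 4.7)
The plan is to show that $N(R)$ is closed under addition and under multiplication by arbitrary elements of $R$ from either side, so that $N(R)$ becomes a two-sided ideal of $R$.

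The technical heart of the argument is the following consequence of LNZS, which I would establish first: for any $a \in N(R)$ with $a^m = 0$ and any $r_1, \ldots, r_{m-1} \in R$,
\[
a r_1 a r_2 \cdots a r_{m-1} a = 0.
\]
I would prove this by induction on $k$, showing that $a r_1 a \cdots a r_{k-1} a \in l(a^{m-k})$ for each $1 \leq k \leq m$. The base $k=1$ is just $a\cdot a^{m-1}=a^m=0$. For the step, $a^{m-k}$ is a nilpotent element whenever $1 \leq k \leq m-1$, so LNZS makes $l(a^{m-k})$ a two-sided ideal; right-multiplying the inductive expression by $r_k$ keeps it inside $l(a^{m-k})$, and peeling one factor of $a$ off $a^{m-k}$ and absorbing it into the element yields $a r_1 a \cdots a r_k a \in l(a^{m-k-1})$. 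Setting $k=m$ places the expression in $l(1)=\{0\}$.

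With the lemma in hand, the three closure conditions follow quickly. For multiplicative closure, if $a^m=0$ then $(ar)^m = (a r a r \cdots a)\cdot r$, whose left factor has exactly $m$ copies of $a$ separated by copies of $r$ and therefore vanishes by the lemma; the symmetric argument handles $(ra)^m$, so $ar, ra \in N(R)$. For additive closure, given $a, b \in N(R)$ with $a^m=0$, $b^n=0$, I would expand $(a+b)^{m+n-1}$ into monomial words of length $m+n-1$ in $\{a,b\}$. By pigeonhole each word contains either at least $m$ copies of $a$ or at least $n$ copies of $b$, and isolating the sub-word between the first and the $m$-th (respectively $n$-th) occurrence of the majority letter exhibits a contiguous factor of the form covered by the lemma (with the intervening powers of the other letter serving as the $r_i$). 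So every monomial vanishes, giving $(a+b)^{m+n-1}=0$.

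The main obstacle is identifying and proving the key lemma. LNZS gives ideal-theoretic information about only one annihilator $l(a^k)$ at a time; the trick is to bootstrap up the entire chain $l(a),\ l(a^2),\ \ldots,\ l(a^{m-1})$ via a telescoping induction in which each step absorbs one pair $r_k, a$ into the annihilator of a strictly lower power of $a$. Once the correct indexing is chosen the verification is mechanical, but finding it is where the ingenuity lies; everything else is just expansion and pigeonhole.
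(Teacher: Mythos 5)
Your proposal is correct and follows essentially the same route as the paper: both arguments bootstrap the LNZS hypothesis along the chain of annihilators $l(a^{m-1}), l(a^{m-2}), \ldots$ to show that inserting arbitrary ring elements between the factors of a vanishing power of a nilpotent element still yields zero, and then combine this with a pigeonhole count on the monomials in the expansion of a suitable power of $a+b$. Your write-up merely isolates this step as an explicit lemma with a cleaner induction (and uses the slightly sharper exponent $m+n-1$ where the paper takes $m+n+1$).
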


\begin{proof}
Suppose $w\in N(R)$. Then $w^n=0$ for some positive integer $n$. Since $R$ is LNZS, $wrw^{n-1}=0$ for any $r\in R$. Proceeding similarly, we get $(wr)^n=0$. Therefore, $wr, rw\in N(R)$. Let $h\in R$ be such that $h^m=0$ for some positive integer $m$. Take $l=m+n+1$. Then $(w+h)^l=\sum\limits_{i_1+j_1+...+i_p+j_p=l}(w^{i_1}h^{j_1}w^{i_2}h^{j_2}...w^{i_p}h^{j_p})$, $0\leq i_1,~j_1,...,~i_p,~j_p\leq l$, $p$ is some positive integer. If $i_1+i_2+...+i_p\geq n$, then $w^{i_1}...w^{i_p}=0$. Since $R$ is LNZS, $w^{i_1}h^{j_1}w^{i_2}h^{j_2}...w^{i_p}h^{j_p}=0$. If $i_1+i_2+...+i_p\leq n$, then $j_1+j_2...+j_p\geq m$. So $h^{j_1}h^{j_2}...h^{j_p}=0$. As above, $w^{i_1}h^{j_1}w^{i_2}h^{j_2}...w^{i_p}h^{j_p}=0$. Thus, $(w+h)^l=0$. Hence $R$ is NI.
\end{proof}

\begin{cor}\label{weak}
LNZS rings are weakly semicommutative.
\end{cor}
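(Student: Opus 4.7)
The plan is to derive weak semicommutativity directly from the LNZS condition, without needing the full strength of Proposition \ref{ni}. Given $w, h \in R$ with $wh = 0$, I want to show that $wrh \in N(R)$ for every $r \in R$, and the natural target is the stronger statement $(wrh)^2 = 0$.

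The first step is to produce a nilpotent element whose left annihilator I can exploit. Starting from $wh=0$, observe that $(hw)^2 = h(wh)w = 0$, so $hw \in N(R)$. Since $R$ is LNZS, $l(hw)$ is an ideal of $R$. Now $w(hw) = (wh)w = 0$ shows $w \in l(hw)$, and because $l(hw)$ is a two-sided ideal, multiplying on the right by any $r \in R$ gives $wr \in l(hw)$, i.e.\ $wrhw = 0$.

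The conclusion then follows by a one-line computation:
\[
(wrh)^2 \;=\; (wrhw)(rh) \;=\; 0,
\]
so $wrh \in N(R)$, as required. The key idea — and the only nontrivial step — is recognising that the correct nilpotent element to annihilate is $hw$ rather than $h$ itself (since $h$ need not be nilpotent); once LNZS is applied to $hw$, the ideal property of $l(hw)$ immediately promotes the equation $w \cdot hw = 0$ into $wr \cdot hw = 0$ for all $r$, and squaring $wrh$ finishes the argument. No further obstacle is expected.
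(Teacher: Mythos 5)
Your proof is correct, and it takes a genuinely different route from the paper. The paper deduces the corollary in one line from Proposition \ref{ni} (LNZS rings are NI) together with the cited external fact that NI rings are weakly semicommutative; your argument bypasses both. Every step checks out: $(hw)^2 = h(wh)w = 0$ puts $hw$ in $N(R)$, the LNZS hypothesis makes $l(hw)$ a two-sided ideal, $w \in l(hw)$ then gives $wr \in l(hw)$, i.e.\ $wrhw = 0$, and hence $(wrh)^2 = (wrhw)(rh) = 0$. Your choice of $hw$ as the nilpotent to annihilate is exactly the right move, and it is the same trick the paper itself uses in its proof that LNZS plus prime implies domain. What your approach buys is self-containedness and a quantitatively stronger conclusion: $wrh$ is nilpotent of index at most $2$, with no appeal to the combinatorial expansion in Proposition \ref{ni} or to the literature. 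What the paper's route buys is economy of exposition, since the NI property is established anyway and is reused elsewhere (e.g.\ in Theorem \ref{qusem} and the results on skew polynomial extensions), so the corollary comes for free once that machinery is in place.
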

\begin{proof}
Follows since NI rings are weakly semicommutative (\cite[Proposition 2.1]{wsem}).
\end{proof}
There exists a weakly semicommutative ring $R$ that is not LNZS ( see the following example ).
\begin{example}

 Let $F$ be a field and $F<x,y>$ the free algebra in non-commuting indeterminates $x,y$ over $F$ and  $I$ denotes the ideal $<x^2>^2$ of $F<x,y>$, where $<x^2>$ is the ideal of $F<x,y>$ generated by $x^2$. Take $R=F<x,y>/I$. Then by \cite[Example 2.1]{wsem}, $R$ is weakly semicommutative and $N(R)=\overline{x}R\overline{x}+R\overline{x}^2R+F\overline{x}$. Hence $\overline{x}\in N(R)$ and $ (\overline{x}^3)\overline{x}=0$. But $\overline{x}^3\overline{y}(\overline{x})\neq 0$. Thus, $R$ is not LNZS.
\end{example}

We write $R_n$ to  denote the ring\\
 $\left \lbrace
 \left(\begin{array}{lccccr}
 a & a_{12} & \dots & a_{1n}\\
 0 & a & \dots & a_{2n}\\
 \vdots & \vdots &\ddots & \vdots\\
 0 & 0 & \dots & a \\
 \end{array}
 \right ):a, a_{ij}\in R,~ i=1,\dots,n-1,~j=2,3,\dots,n\right \rbrace$. \\
 In \cite[Example 2.1]{weaklysemi}, it is shown that if $R$ is reduced then, $R_n$ is not semicommutative but weakly  semicommutative for $n\geq 4$. So a suspicion arises whether $R_n$ is LNZS for $n\geq 4$ whenever $R$ is reduced. However, the following example obliterates the possibility.
 \begin{example}\label{R4}
 Let $R=\mathbb{R}$, $R_4=\left\{\left(\begin{array}{rrrr}
 a & b & c & d\\
 0 & a & e & f\\
 0 & 0 & a & g\\
 0 & 0 & 0 & a
 \end{array}\right)
 :a,b,c,d,e,f,g\in R \right\} $, where $\mathbb{R}$ represents the field of real numbers. Take $A=\left( \begin{array}{rrrr}
 0 & 1 & 0 & 1\\
 0 & 0 & 0 & 0\\
 0 & 0 & 0 & 1\\
 0 & 0 & 0 & 0
 \end{array}\right)\in N(R_4)$. \\ Now, $E=\left( \begin{array}{rrrr}
 0 & 1 & 0 & 1\\
 0 & 0 & 0 & 1\\
 0 & 0 & 0 & 1\\
 0 & 0 & 0 & 0
 \end{array}\right)\in l(A)$ i.e., $EA=0$. Take $F=\left( \begin{array}{rrrr}
 1 & 1 & 1 & 1\\
 0 & 1 & 1 & 1\\
 0 & 0 & 1 & 1\\
 0 & 0 & 0 & 1
 \end{array}\right)$. Then, $EF=\left( \begin{array}{rrrr}
 0 & 1 & 0 & 1\\
 0 & 0 & 0 & 1\\
 0 & 0 & 0 & 1\\
 0 & 0 & 0 & 0
 \end{array}\right)\left( \begin{array}{rrrr}
 1 & 1 & 1 & 1\\
 0 & 1 & 1 & 1\\
 0 & 0 & 1 & 1\\
 0 & 0 & 0 & 1
 \end{array}\right)=\left( \begin{array}{rrrr}
 0 & 1 & 1 & 2\\
 0 & 0 & 0 & 1\\
 0 & 0 & 0 & 1\\
 0 & 0 & 0 & 0
 \end{array}\right)$ and $EFA=\left( \begin{array}{rrrr}
 0 & 0 & 0 & 1\\
 0 & 0 & 0 & 0\\
 0 & 0 & 0 & 0\\
 0 & 0 & 0 & 0
 \end{array}\right)$. Thus, $EF\notin l(A)$, that is, $l(A)$ is not an ideal. Therefore, $R$ is not LNZS.
 \end{example}
  \begin{thm}
   Let $R$ be a ring. Then $R$ is a domain if and only if $R$ is an LNZS and prime ring.
   \end{thm}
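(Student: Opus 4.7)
The forward direction is immediate: in a domain $N(R)=\{0\}$, so the only nilpotent element whose annihilator must be checked is $0$, and $l(0)=R$ is trivially an ideal; also every domain is prime by definition.

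For the converse, assume $R$ is LNZS and prime. My plan is to first deduce that $R$ is reduced, and then apply the standard fact that a reduced prime ring is a domain. To show $R$ is reduced, I would argue by contradiction: suppose there exists a nonzero $a\in N(R)$, and choose the smallest $n\geq 2$ with $a^n=0$, so that $a^{n-1}\neq 0$. Since $a^{n-1}\in N(R)$ and $a\cdot a^{n-1}=0$, we have $a\in l(a^{n-1})$. The LNZS hypothesis upgrades $l(a^{n-1})$ from merely a left ideal to a two-sided ideal, so $aR\subseteq l(a^{n-1})$, that is, $aRa^{n-1}=0$. Primeness then forces $a=0$ or $a^{n-1}=0$, both contradictions. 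Hence $N(R)=\{0\}$.

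For the concluding step, suppose $ab=0$ in a reduced prime ring. Then for any $r\in R$, $(bra)^2 = br(ab)ra = 0$, so reducedness gives $bra=0$, i.e., $bRa=0$. Primeness yields $a=0$ or $b=0$, and hence $R$ is a domain.

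The only conceptual move is the use of LNZS to promote $l(a^{n-1})$ to a two-sided ideal; once this is in place, primeness closes both halves of the converse essentially mechanically, so I do not anticipate any substantive obstacle beyond identifying the right element $a^{n-1}$ to annihilate.
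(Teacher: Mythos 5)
Your proof is correct. The mechanism at its heart is the same one the paper uses --- LNZS promotes the left annihilator of a nilpotent element to a two-sided ideal, and primeness then kills the resulting product $xRy=0$ --- but your decomposition of the converse is genuinely different. The paper argues directly: from $wh=0$ it forms the square-zero element $hw$, gets $hwRhw=0$ hence $hw=0$ by primeness, then forms $wrh$ (also square-zero since $(wrh)^2=wr(hw)rh=0$), gets $wrhRwrh=0$ hence $wrh=0$, and concludes $wRh=0$. You instead factor the argument through reducedness: LNZS plus prime (hence semiprime) gives $aRa^{n-1}=0$ for a minimal-index nilpotent $a$, forcing $N(R)=0$, and then the standard fact that a reduced prime ring is a domain finishes. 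Your first step is essentially Proposition \ref{gpvr}(1) of the paper specialized to prime rings (and could even be simplified by taking a square-zero element rather than a general nilpotent of minimal index), so your route has the virtue of reusing a result already established and isolating the reduced-prime-implies-domain step as a self-contained classical fact; the paper's route is a single self-contained computation that never names reducedness. Both are complete and correct.
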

    
  \begin{proof}
    The necessary part is obvious. Conversely, assume $R$ is an LNZS and prime ring and $w,h\in R$ be such that $wh=0$. Since $R$ is LNZS and $(hw)^2=0$, $hwRhw=0$. By hypothesis, $hw=0$. So for any $r\in R$, $(wrh)^2=0$ which further implies that $wrhRwrh=0$, that is, $wrh=0$. Hence $w=0$ or $h=0$.
    \end{proof}
     One might suspect that a homomorphic image of an LNZS ring is LNZS. However, there exists an LNZS ring whose homomorphic image is not LNZS ( see the following example ).
              \begin{example}
              Let $R=D[x,y,z]$, where $D$ is a division ring and $x,~y$, and $z$ are non-commuting indeterminates. Take $I=<xy>$. As $R$ is a domain, $R$ is LNZS. Clearly, $\overline{yx}\in N(R/I)$ and $\overline{x}(\overline{yx})=\overline{xyx}=0$. But, $(\overline{x})(\overline{z})\overline{yx}\neq 0$. So $R/I$ is not LNZS. 
              
              \end{example}
     \begin{thm}\label{qusem}
       Let $R$ be an $LNZS$ ring. Then $R/l(s)$ is semicommutative for any $s\in N(R)$.
            \end{thm}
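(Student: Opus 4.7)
The plan is to first observe that $l(s)$ is a two-sided ideal of $R$ (since $s \in N(R)$ and $R$ is LNZS), so the quotient $R/l(s)$ is an honest ring. Then I would take arbitrary $a,b \in R$ with $\overline{a}\,\overline{b} = 0$ in $R/l(s)$, which unpacks to $abs = 0$, and my goal is to show $arb \in l(s)$ for every $r \in R$, i.e.\ $arbs = 0$.

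The key reduction is to switch focus from $l(s)$ to $l(bs)$. By Proposition \ref{ni}, $R$ is NI, so $N(R)$ is an ideal; since $s \in N(R)$ we get $bs \in N(R)$. (This step can also be read off directly from the proof of Proposition \ref{ni}, where it is shown that left multiples of nilpotent elements are nilpotent.) Now $abs = 0$ says exactly $a \in l(bs)$, and because $bs$ is nilpotent and $R$ is LNZS, $l(bs)$ is a two-sided ideal. Hence $ar \in l(bs)$ for every $r \in R$, giving $arbs = 0$, i.e., $arb \in l(s)$, so $\overline{a}\,\overline{r}\,\overline{b} = 0$ in $R/l(s)$.

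This is essentially the whole argument; I do not expect a serious obstacle. The only conceptual point is recognizing that the LNZS hypothesis should be applied to $l(bs)$ rather than to $l(s)$ itself, and that the passage from $s$ nilpotent to $bs$ nilpotent is supplied by the NI property already established for LNZS rings in Proposition \ref{ni}.
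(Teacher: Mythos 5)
Your proposal is correct and is essentially the paper's own argument: both reduce the condition $\overline{a}\,\overline{b}=\overline{0}$ in $R/l(s)$ to membership in $l(bs)$, note that $bs\in N(R)$ via Proposition \ref{ni}, and then apply the LNZS hypothesis to the nilpotent element $bs$ rather than to $s$. Your explicit remark that $l(s)$ is two-sided (so that $R/l(s)$ is a ring) is a small point the paper leaves implicit, but the route is the same.
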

     \begin{proof}
     Let $a$ be an element of $R$ and $x\in l(as)$. By Proposition \ref{ni}, $N(R)$ is an ideal and for any $r\in R$, $xr\in l(as)$ as $R$ is LNZS. So $\overline{x}\overline{r}\in l(\overline{a})$, that is, $l(\overline{a})$ is an ideal.
     \end{proof} 
     
      \begin{thm}
       If $R$ is an $LNZS$ ring and $H$ an ideal consisting of all nilpotent elements of bounded index ~$ m$ in $R$, then $R/H $ is $LNZS$.
      \end{thm}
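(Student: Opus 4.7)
The plan is to reduce the statement to a self-contained assertion about $R$ and then prove it by a short induction. Given $\overline{a}\in N(R/H)$ together with some $\overline{y}$ satisfying $\overline{y}\,\overline{a}=0$, the relation $\overline{a}^{\,n}=0$ gives $a^{n}\in H$, and because every element of $H$ has nilpotency index at most $m$, $a^{nm}=(a^{n})^{m}=0$; in particular $a\in N(R)$. Similarly $\overline{y}\,\overline{a}=0$ means $ya\in H$, so $(ya)^{m}=0$. Thus the statement reduces to the following claim inside $R$: whenever $a\in N(R)$, $y\in R$, and $(ya)^{m}=0$, one has $(yxa)^{m}=0$ for every $x\in R$; this forces $yxa\in H$ and hence $\overline{y}\,\overline{x}\,\overline{a}=0$ in $R/H$.

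I would prove the stronger assertion
\[
(ya)^{m-k}(yxa)^{k}=0\qquad(0\le k\le m)
\]
by induction on $k$, for any fixed $x\in R$. The base case $k=0$ is the hypothesis $(ya)^{m}=0$, and setting $k=m$ gives exactly what is needed. For the inductive step, rewrite
\[
(ya)^{m-k}(yxa)^{k}=(ya)^{m-k-1}\,y\cdot a(yxa)^{k},
\]
so the inductive hypothesis says $(ya)^{m-k-1}y\in l\bigl(a(yxa)^{k}\bigr)$. Now $a\in N(R)$ and, by Proposition \ref{ni}, $N(R)$ is an ideal, so $a(yxa)^{k}\in N(R)$; the LNZS hypothesis on $R$ then forces $l\bigl(a(yxa)^{k}\bigr)$ to be a two-sided ideal. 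Multiplying $(ya)^{m-k-1}y$ on the right by $x$ therefore keeps us inside this annihilator, giving
\[
(ya)^{m-k-1}\,yx\cdot a(yxa)^{k}=(ya)^{m-k-1}(yxa)^{k+1}=0,
\]
which is the next instance of the induction.

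The main difficulty I expect is in choosing this inductive invariant: trying to derive $(yxa)^{m}=0$ from $(ya)^{m}=0$ in a single manipulation does not obviously produce any nilpotent element on which LNZS can act, whereas sliding one $x$ into the product at a time from the right lets LNZS be applied to the nilpotent tail $a(yxa)^{k}$ at each step. Once this invariant is identified, the NI property (a consequence of LNZS by Proposition \ref{ni}) supplies nilpotent elements on demand, and the two-sidedness of $l(\cdot)$ for nilpotents closes each induction step mechanically.
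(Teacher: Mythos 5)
Your proof is correct and follows the same route as the paper: pull the hypothesis back to $(ya)^m=0$ in $R$, show $(yxa)^m=0$ for all $x$, and conclude $yxa\in H$. The paper simply asserts that $(ya)^m=0$ implies $(yxa)^m=0$ ``by Proposition \ref{ni} and $R$ being LNZS,'' whereas your induction on $k$ via the invariant $(ya)^{m-k}(yxa)^k=0$ supplies exactly the justification the paper leaves implicit.
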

       \begin{proof}
       Let $\overline{b}\in N(R/H),\overline{a} \in R/H$ be such that $\overline{a}\overline{b}=0$. Then $ab\in H$, that is, $(ab)^m=0$. As $\overline{b}\in N(R/H)$, $b\in N(R)$. By Proposition \ref{ni} and $R$ being LNZS, $(ab)^m=0$ implies $(arb)^m=0$ for any $r\in R$. This implies $arb\in H$. Therefore, $R/H$ is LNZS.
      \end{proof}

        Let $S$ be an $(R,R)$-bimodule. The \textit{trivial extension} of $R$ by $S$ is the ring $T(R,S)=R\bigoplus S$, where the addition is usual and the multiplication is defined as:\\
                 $(r_1,s_1)(r_2,s_2)=(r_1r_2,r_1s_2+s_1r_2),s_i\in S,r_i\in R$ and $i=1,2$. $T(R,S)$ is isomorphic to the ring  $\left\{\left(\begin{array}{rr}
                 t & s\\
                 0 & t
                 \end{array}
                 \right) : t\in R, s\in S \right\}$, where the operations are usual matrix operations.
       \begin{thm}
        If $T(R,R)$ is $LNZS$, then $R$ is semicommutative.
       \end{thm}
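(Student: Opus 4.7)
The plan is to suppose $ab = 0$ in $R$ with $a, b \in R$ arbitrary, and to produce from this the full semicommutative conclusion $aRb = 0$ by lifting the equation to $T(R,R)$ and exploiting the LNZS hypothesis there. The natural lifts to use are the diagonal matrix $A = \begin{pmatrix} a & 0 \\ 0 & a \end{pmatrix}$ for $a$, and the strictly-upper-triangular matrix $B = \begin{pmatrix} 0 & b \\ 0 & 0 \end{pmatrix}$ for $b$; the latter is clearly nilpotent in $T(R,R)$, so it is eligible for the LNZS property.

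Next I would verify directly that $AB = \begin{pmatrix} 0 & ab \\ 0 & 0 \end{pmatrix} = 0$, which places $A$ inside $l(B)$. Because $T(R,R)$ is LNZS and $B$ is nilpotent, $l(B)$ is a two-sided ideal, hence closed under right multiplication by any element of $T(R,R)$. Thus for every $r, s \in R$ one has $A \begin{pmatrix} r & s \\ 0 & r \end{pmatrix} \in l(B)$.

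Computing this product and then multiplying on the right by $B$ gives
\[
\begin{pmatrix} a & 0 \\ 0 & a \end{pmatrix}\begin{pmatrix} r & s \\ 0 & r \end{pmatrix}\begin{pmatrix} 0 & b \\ 0 & 0 \end{pmatrix} = \begin{pmatrix} 0 & arb \\ 0 & 0 \end{pmatrix},
\]
and this must equal $0$, forcing $arb = 0$ for every $r \in R$. Since $a, b$ with $ab = 0$ were arbitrary, $R$ is semicommutative.

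I do not anticipate a genuine obstacle here: the only subtlety is remembering that a left annihilator is automatically a left ideal, so the LNZS hypothesis contributes exactly the closure under right multiplication used in the displayed identity. The choice of $B$ as a nilpotent lift of $b$ (rather than, say, a diagonal lift) is essential, because only then is the LNZS hypothesis available, and simultaneously the slot $(1,2)$ in the product $\begin{pmatrix} r & s \\ 0 & r \end{pmatrix} B$ captures $rb$, which is precisely what is needed to extract $arb$ after multiplying by $A$.
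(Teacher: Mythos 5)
Your proposal is correct and follows essentially the same route as the paper's own proof: lift $a$ to the diagonal matrix, lift $b$ to the nilpotent strictly upper triangular matrix, observe the product vanishes, and invoke the LNZS property to insert an arbitrary middle factor whose $(1,2)$-entry of the triple product is $arb$. The only cosmetic difference is that the paper takes the middle factor to be diagonal, whereas you allow a general element of $T(R,R)$; the computation and conclusion are identical.
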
 
      \begin{proof}
      Let $w,~h\in R$ and $wh=0$. Then, $\begin{pmatrix}
      w & 0\\
      0 & w
      \end{pmatrix}\begin{pmatrix}
       0 & h\\
       0 & 0
       \end{pmatrix}=\begin{pmatrix}
        0 & 0\\
        0 & 0
        \end{pmatrix}$. Since $T(R,R)$ is LNZS, $\begin{pmatrix}
         w & 0\\
         0 & w
         \end{pmatrix}\begin{pmatrix}
          r & 0\\
          0 & r
          \end{pmatrix}\begin{pmatrix}
           0 & h\\
           0 & 0
           \end{pmatrix}=\begin{pmatrix}
            0 & 0\\
            0 & 0
            \end{pmatrix}$, for any $r\in R$. This implies that \\ $\begin{pmatrix}
             0 & wrh\\
             0 & 0
             \end{pmatrix}=\begin{pmatrix}
              0 & 0\\
              0 & 0
              \end{pmatrix}$, that is, $wrh=0$.
      \end{proof}  
  However, the converse is not true.
      \begin{example}
      Consider $\mathbb{H}$ as the Hamilton quaternions over $\mathbb{R}$ and $R=T(\mathbb{H},\mathbb{H})$, where $\mathbb{R}$ denotes the field of real numbers. By \cite[Proposition 1.6]{ext}, $R$ is reversible and hence semicommutative. Take $S=T(R,R)$. Let $A=\begin{pmatrix}
      \begin{pmatrix}
      0 & i\\
      0 & 0
      \end{pmatrix} & \begin{pmatrix}
       j & 0\\
       0 & j
       \end{pmatrix}\\
       \begin{pmatrix}
        0 & 0\\
        0 & 0
        \end{pmatrix} & \begin{pmatrix}
         0 & i\\
         0 & 0
         \end{pmatrix}
      \end{pmatrix}\in S$ and $B=\begin{pmatrix}
       \begin{pmatrix}
       0 & 1\\
       0 & 0
       \end{pmatrix} & \begin{pmatrix}
        k & 0\\
        0 & k
        \end{pmatrix}\\
        \begin{pmatrix}
         0 & 0\\
         0 & 0
         \end{pmatrix} & \begin{pmatrix}
          0 & 1\\
          0 & 0
          \end{pmatrix}
       \end{pmatrix}\in N(S)$. Observe that $A\in l(B)$. Take \\ $C=\begin{pmatrix}
        \begin{pmatrix}
        j & i\\
        0 & j
        \end{pmatrix} & \begin{pmatrix}
         0 & 0\\
         0 & 0
         \end{pmatrix}\\
         \begin{pmatrix}
          0 & 0\\
          0 & 0
          \end{pmatrix} & \begin{pmatrix}
           j & i\\
           0 & j
           \end{pmatrix}
        \end{pmatrix}$. It is easy to see that $ACB\neq 0$. Hence $l(B)$ is not an ideal, that is, $S$ is not LNZS.
      \end{example}
     According to \cite{qn}, $R$ is called \textit{quasi-normal} if $eR(1-e)Re=0$ for each $e\in E(R)$.
     \begin{thm}\label{qnor}
     $LNZS$ rings are quasi-normal.
     \end{thm}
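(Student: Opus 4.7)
The plan is to verify $eR(1-e)Re = 0$ for an arbitrary idempotent $e \in E(R)$, which amounts to showing $er(1-e)se = 0$ for all $r,s \in R$. The key observation is that the natural nilpotent hiding in this expression is not $er(1-e)$ on the left, but rather $(1-e)se$ on the right; this is the single move that makes the LNZS hypothesis applicable.

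Concretely, I would fix $e \in E(R)$ and $r,s \in R$, and set $b = (1-e)se$. First I would note that
\[
b^2 = (1-e)se(1-e)se = 0,
\]
since $e(1-e) = 0$, so $b \in N(R)$. Next I would observe that $eb = e(1-e)se = 0$, so $e \in l(b)$. Now invoke the LNZS hypothesis: because $b \in N(R)$, the left annihilator $l(b)$ is a two-sided ideal of $R$. In particular $er \in l(b)$ for every $r \in R$, which unpacks to
\[
er(1-e)se = er \cdot b = 0.
\]
Since $r$ and $s$ were arbitrary, this establishes $eR(1-e)Re = 0$, i.e. $R$ is quasi-normal.

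The only genuine obstacle is choosing the correct nilpotent: had I tried to work with $a = er(1-e)$ (which is also nilpotent with $(1-e) \in l(a)$), the ideal property of $l(a)$ would have delivered $(1-e)s \cdot a = 0$ rather than the needed relation. Picking $b = (1-e)se$ instead puts $e$ (and hence $er$) into the left annihilator, which is exactly what is required. Once this choice is made the argument is a one-line application of Definition \ref{def11}.
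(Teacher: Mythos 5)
Your proof is correct and is essentially the paper's own argument: both take the nilpotent $(1-e)se$ (the paper writes it as $(1-e)re$), note that $e$ lies in its left annihilator, and use the ideal property to insert an arbitrary factor after $e$. The variable names differ but the reasoning is identical.
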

     \begin{proof}
     Let $r$ be an arbitrary element in $R$. Then $e(1-e)re=0$. Clearly, $(1-e)re\in N(R)$. Since $R$ is LNZS, $es(1-e)re=0$ for any $s\in R$. Thus, $eR(1-e)Re=0$.
     \end{proof}  
     However, there exists a quasi-normal ring which is not LNZS, as shown in the following example.
     \begin{example}
        Let $\mathbb{Z}$ be the ring of integers, and consider the ring $R$=\\
        $\left\{\left(\begin{array}{rr}
        x & y \\
        z & w\\ 
        \end{array}\right): x\equiv w~(mod~ 2), y\equiv z~(mod ~2),x,y,z,w\in \mathbb{Z} \right \}$. By \cite[Example 2.7]{csemi}, $R$ is abelian and hence quasi-normal. Observe that, $\left(\begin{array}{rr}
          0 & 2 \\
          0 & 0\\
          \end{array} \right)^2=\left (\begin{array}{rr}
           0 & 0\\
           0 & 0\\
           \end{array} \right)$. But \\ $\left (\begin{array}{rr}
             0 & 2 \\
             0 & 0\\
             \end{array} \right)\left (\begin{array}{rr}
              2 & 2 \\
              2 & 2\\
              \end{array} \right)\left (\begin{array}{rr}
                   0 & 2 \\
                   0 & 0\\
                   \end{array} \right)\neq \left (\begin{array}{rr}
               0 & 0\\
               0 & 0\\
               \end{array} \right)$. So, $R$ is not LNZS.
     \end{example}       
\begin{cor}\label{lma}
LNZS rings are left min-abel.
\end{cor}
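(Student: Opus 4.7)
The plan is to exploit Theorem~\ref{qnor}, which provides $eR(1-e)Re = 0$ for every idempotent $e$ of $R$. Recall that $e \in E(R)$ is left semicentral precisely when $(1-e)Re = 0$, so the goal is to upgrade the ``sandwiched'' quasi-normal identity to an honest one-sided vanishing for those idempotents $e$ with $Re$ minimal. Minimality of $Re$ will be the extra leverage that makes this possible.

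I would fix $e \in ME_l(R)$ and an arbitrary $r \in R$, and set $x = (1-e)re$. Note that $x = ((1-e)r)e \in Re$, and by quasi-normality $etx = et(1-e)re \in eR(1-e)Re = 0$ for every $t \in R$; in other words $eRx = 0$. In particular, $Rx$ is a left ideal of $R$ contained in $Re$, and $e$ kills every element of $Rx$ on the left.

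I would then argue by contradiction: suppose $x \neq 0$. Since $Re$ is minimal and $Rx$ is a nonzero left ideal inside $Re$, we must have $Rx = Re$; in particular $e \in Rx$, say $e = sx$. Then $e = e^2 = (es)x \in eRx = 0$, contradicting $e \neq 0$ (which is forced because $Re$ is a nonzero minimal left ideal). Hence $(1-e)re = 0$ for every $r \in R$, so $e$ is left semicentral, and $R$ is left min-abel. The only subtle step is recognizing that the quasi-normal identity rewrites as $eRx = 0$ for this particular $x$; once that is noticed, the minimality of $Re$ delivers the contradiction essentially mechanically.
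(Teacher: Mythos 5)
Your argument is correct. Both you and the paper route the proof through Theorem~\ref{qnor} (LNZS rings are quasi-normal), but where the paper disposes of the corollary in one line by citing \cite[Theorem 2.4]{qn} for the implication ``quasi-normal $\Rightarrow$ left min-abel,'' you prove that implication from scratch. Your chain of reasoning checks out at every step: for $e\in ME_l(R)$ and $x=(1-e)re$ one has $x\in Re$, hence $Rx\subseteq Re$; quasi-normality gives $eRx\subseteq eR(1-e)Re=0$; if $x\neq 0$ then $Rx\neq 0$ (since $x=1\cdot x\in Rx$), so minimality of $Re$ forces $Rx=Re$, giving $e=sx$ and then $e=e^2=(es)x\in eRx=0$, contradicting $e\neq 0$ (a minimal left ideal is by definition nonzero). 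So $(1-e)re=0$ for all $r$, i.e.\ $e$ is left semicentral. What your version buys is self-containedness --- the reader need not consult the external reference --- at the cost of a short extra argument; the paper's version is shorter but opaque without \cite{qn}. In effect you have reproved the relevant special case of the cited theorem, which is a legitimate and arguably preferable way to present the corollary.
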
     

\begin{proof}
 Quasi normal rings are left min-abel (\cite[Theorem 2.4]{qn}).
\end{proof}
 It is easy to observe that the class of LNZS rings properly contains the class of reduced rings. In the following proposition, we provide some conditions under which an LNZS ring turns out to be reduced.
  \begin{prop}\label{gpvr}
  Let $R$ be an LNZS ring. Then $R$ is reduced in each of the following cases:
  \begin{enumerate}
  \item $R$ is semiprime.
  \item R is left MC2, and every simple singular left R-module is Wnil-injective. 
  \item Every idempotent in $R$ is right semicentral, and every simple singular left $R$-module is $Wnil$-injective.
  \end{enumerate}
  \end{prop}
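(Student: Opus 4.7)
In all three cases the task reduces to proving that any square-zero element vanishes, since if $N(R)\neq 0$ one can pick $b\in N(R)$ with $b^{n}=0$ and $n\geq 2$ minimal, and then $a:=b^{n-1}$ is nonzero and satisfies $a^{2}=b^{2n-2}=0$. For (1) this reduction makes the argument immediate: applying LNZS to $a\cdot a=0$ gives $aRa=0$, hence $(RaR)^{2}\subseteq R(aRa)R=0$, and semiprimeness forces $RaR=0$, so $a=0$.

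For (2), I argue by contradiction starting from such an $a$ with $a\neq 0$ and $a^{2}=0$. By Proposition \ref{ni} the set $l(a)$ is a two-sided ideal, properly contained in $R$ (since $1\cdot a\neq 0$), so I enlarge it to a maximal left ideal $M$. I then split into two cases according to whether $M$ is essential in $_{R}R$.

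If $M$ is not essential, then $R=R(1-e)\oplus Re$ with $e$ a nonzero idempotent, $M=R(1-e)$, and $Re\cong R/M$ minimal, so $e\in ME_{l}(R)$. Corollary \ref{lma} yields $(1-e)Re=0$. From $a\in M=R(1-e)$ I obtain $ae=0$, and then $are=a(ere)=(ae)(re)=0$ for every $r$, so $aRe=0$; the left MC2 hypothesis now gives $eRa=0$, hence $ea=0$, so $e\in l(a)\subseteq R(1-e)$, which forces $e=e^{2}=0$, contradicting $e\neq 0$. If instead $M$ is essential, then $R/M$ is a simple singular left $R$-module, hence Wnil-injective. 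Because $a^{2}=0$, the only positive integer with $a^{m}\neq 0$ is $m=1$, so the left $R$-map $f\colon Ra\to R/M$ given by $f(ra):=r+M$ (well defined because $l(a)\subseteq M$) extends to a left $R$-map $g\colon R\to R/M$, which is right multiplication by some $c+M$. Comparing $f(a)=1+M$ with $g(a)=ac+M$ yields $1-ac\in M$; since $a\in l(a)$ and $l(a)$ is a two-sided ideal (LNZS), $ac\in l(a)\subseteq M$, hence $1\in M$, a contradiction. Therefore $N(R)=0$.

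For (3), I first observe that right semicentrality of every idempotent makes $R$ abelian: applying the hypothesis to both $e$ and $1-e$ yields $eR(1-e)=0=(1-e)Re$, which forces $er=re$ for every $r$. Abelianness in turn implies left MC2 ($aRe=0$ with $e$ central gives $ea=0$, hence $eRa=Rea=0$), so (3) collapses to the setting of (2). The main difficulty I anticipate is the non-essential case of (2), where the two structural inputs (min-abelness for the idempotent and MC2 to flip $aRe=0$ into $eRa=0$) must be combined delicately because $M$ is only a \emph{left} ideal; the essential case is then routine once one exploits that LNZS promotes $l(a)$ to a two-sided ideal, which is precisely what closes the loop $1-ac\in M\Rightarrow 1\in M$.
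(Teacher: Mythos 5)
Your proof is correct and follows essentially the same strategy as the paper's: for (1) the square-zero element is killed by $wRw=0$ plus semiprimeness, and for (2)--(3) one embeds $l(a)$ in a maximal left ideal, splits on essentiality, and uses Wnil-injectivity of the simple singular quotient to force $1\in M$ via the fact that LNZS makes $l(a)$ two-sided. Two local differences are worth noting, both to your credit: in the non-essential case of (2) the paper invokes an external result (\cite[Theorem 1.8(3)]{mc2}) to conclude $e\in Z(R)$, whereas you derive the contradiction directly from left semicentrality of $e$ combined with the MC2 flip $aRe=0\Rightarrow eRa=0$, which is more self-contained; and for (3) you observe that right semicentrality of all idempotents forces $R$ abelian, hence left MC2, so (3) collapses into (2), while the paper reruns the whole argument. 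One trivial slip: the two-sidedness of $l(a)$ is the LNZS hypothesis itself (applied to $a\in N(R)$), not a consequence of Proposition \ref{ni}; the citation should be to Definition \ref{def11}.
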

  \begin{proof}
  Let $R$ be an LNZS ring.
  \begin{enumerate}
  \item  Assume that $w^2=0$ for some $w\in R$. Since $R$ is LNZS, $wRw=0$. By hypothesis, $w=0$. Therefore, $R$ is reduced.
  
  \item Suppose that there exists $h~(\neq 0)\in R$ with $h^2=0$. Then $l(h)\subseteq H$ for some maximal left ideal $H$. If possible, assume that $H$ is not an essential left ideal of $R$. Then $H=l(e)$ for some $e\in ME_l(R)$. By Corollary \ref{lma}, $R$ is left min-abel. By \cite[Theorem 1.8 (3)]{mc2}, $e\in Z(R)$. As $h\in H$, $eh=0$ and so $e\in l(h)\subseteq H=l(e)$, a contradiction. Hence $H$ is an essential left ideal of $R$, and $R/H$ is a simple singular left $R$-module. By hypothesis, $R/H$ is $W$nil-injective. Define a left $R$-homomorphism $\Psi:Rh\rightarrow R/H$ via $\Psi  (rh)=r+H$. Then $\Psi$ can be extended from $R$ to R/H. So, $1-hl\in H$ for some $l\in R$. Since $R$ is LNZS, $h^2=0$ yields $hRh=0$. So $hl\in l(h)\subseteq H$, that is, $1\in H$, a contradiction. Therefore $R$ is reduced.
  \item Let $w~(\neq 0)\in R$ with $w^2=0$. Then $l(w)\subseteq W$ for some maximal left ideal $W$ of $R$. If possible, assume that $W$ is not essential in $_RR$. Then $W=l(e)$ for some $e\in E(R)$. So, $we=0$ as $w\in l(w)\subseteq W=l(e)$. Since $e$ is right semicentral, $ew=ewe=0$. Hence $e\in l(w)\subseteq l(e)$, a contradiction. So $W$ is essential maximal left ideal. This implies $R/W$ is simple singular left $R$-module. By hypothesis $R/W$ is $W$nil-injective left $R$-module. As in $(2)$, $1-ws\in W$ for some $s\in R$. Since $R$ is LNZS, $w^2=0$ implies $wRw=0$. So, $ws\in l(w)\subseteq W$. This implies $1\in W$, a contradiction. Thus, $w=0$.
  
  \end{enumerate}
  \end{proof} 
    \begin{prop}\label{subdi}
           The subdirect product of an arbitrary family of LNZS rings is LNZS.
       \end{prop}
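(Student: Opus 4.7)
The plan is to reduce the claim to a componentwise check, using the fact that both nilpotency and left annihilation are preserved and reflected by the canonical projections of a subdirect product. Let $\{R_i\}_{i\in I}$ be a family of LNZS rings and let $R\subseteq \prod_{i\in I} R_i$ be a subdirect product, so each projection $\pi_i:R\to R_i$ is surjective. I will write elements of $R$ as tuples $(a_i)_{i\in I}$ with $a_i=\pi_i(a)\in R_i$.

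First I would fix an arbitrary $a=(a_i)\in N(R)$ and observe that $a^n=0$ for some $n$ forces $a_i^n=0$ in each $R_i$, so $a_i\in N(R_i)$. The left annihilator $l(a)$ is automatically a left ideal of $R$, so to conclude that it is a two-sided ideal it suffices to show it is closed under right multiplication by arbitrary elements of $R$.

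Next I would take any $x=(x_i)\in l(a)$ and any $r=(r_i)\in R$, and check the condition componentwise: since $xa=0$ in $R$, looking at the $i$th coordinate gives $x_i a_i=0$, so $x_i\in l(a_i)$ in $R_i$. Because $R_i$ is LNZS and $a_i\in N(R_i)$, the set $l(a_i)$ is an ideal of $R_i$, hence $x_i r_i\in l(a_i)$, i.e.\ $x_i r_i a_i=0$ for every $i\in I$. Assembling the components, $(xra)_i=x_i r_i a_i=0$ for all $i$, so $xra=0$ and $xr\in l(a)$. This shows $l(a)$ is an ideal of $R$ for every $a\in N(R)$, proving that $R$ is LNZS.

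There is no real obstacle here: the only thing to be careful about is that we are working inside a subring of the full direct product rather than the full product itself, but this plays no role since the LNZS condition involves only multiplication and equality, both of which are evaluated coordinatewise, and the elements $r$, $x$ we manipulate are chosen to lie in $R$ from the start. Surjectivity of the $\pi_i$'s is not even needed in the argument, which shows the stronger statement that any subring of a product of LNZS rings is LNZS.
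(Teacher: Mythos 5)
Your proof is correct and follows essentially the same argument as the paper: both reduce the condition to the components of the subdirect product (you via the projections $\pi_i$, the paper via the quotients $R/I_\delta$ with $\bigcap_\delta I_\delta=0$), apply the LNZS hypothesis there, and reassemble using the fact that the intersection of the kernels is zero. Your closing observation that surjectivity is never used, so that any subring of a product of LNZS rings is LNZS, is also correct and consistent with the paper's later appeal to closure of LNZS under subrings.
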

       \begin{proof}
        Let $\{I_{\delta}| \delta\in \Delta \}$ be a family of ideals of $R$ such that $\cap_{\delta \in \Delta}I_\delta=0$ and $R/I_\delta$ is LNZS for each $\delta$, where $\Delta$ is an index set. Take $b\in N(R)$ and $a\in l(b)$. Clearly, for each $\delta$,  $b+I_\delta \in N(R/I_\delta)$ and $a+I_\delta\in l(b+I_\delta)$. Since $R/I_\delta$ is LNZS and $(a+I_\delta)(x+I_\delta)(b+I_\delta)=I_{\delta}$ for any $x\in R$, that is, $axb+I_\delta=I_{\delta}$. This implies that $axb\in I_\delta$ for all $\delta\in \Delta$, that is, $axb\in \cap_{\delta \in \Delta}I_\delta=0$ and so $l(a)$ is an ideal. Therefore, the subdirect product of LNZS rings is LNZS.
       \end{proof}
     Let $A$ be an algebra (not necessarily with identity) over a commutative ring $S$. The \emph{Dorroh extension} of $A$ by $S$ is the ring denoted by $A \bigoplus _{D} S$, with the operations $(a,s)+(a_1,s_1)=(a+a_1,s+s_1)$ and $(a,s)(a_1,s_1)=(a a_1+sa_1+s_1a,ss_1)$ where $a, a_1\in A$ and $s, s_1\in S$.
     
      \begin{prop}
          Dorroh extension of an $LNZS$ ring $R$ by $\mathbb{Z}$ is LNZS.
          \end{prop}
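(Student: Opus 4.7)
The plan is to first identify the nilpotent elements of $T = R \bigoplus_{D} \mathbb{Z}$ explicitly, and then to reduce the LNZS condition on $T$ to the LNZS condition on $R$ via the simple identity $ba + ma = (b + m \cdot 1_R)\,a$ in $R$, where $1_R$ is the identity of $R$.

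First, I would observe that the second coordinate of any Dorroh product is multiplicative, so $(a, n)^{k}$ has second coordinate $n^{k}$; since $\mathbb{Z}$ is a domain, every nilpotent element of $T$ must have second coordinate $0$. Conversely, $(a, 0)^{k} = (a^{k}, 0)$, and therefore $N(T) = N(R) \times \{0\}$. It suffices to show that for each $a \in N(R)$, the left annihilator $l_{T}((a, 0))$ is an ideal of $T$.

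The main step is then as follows. Given $a \in N(R)$ and $(b, m) \in T$ with $(b, m)(a, 0) = (0, 0)$, the Dorroh multiplication yields $ba + ma = 0$, which I would rewrite as $(b + m \cdot 1_R)\,a = 0$. Hence $b + m \cdot 1_R \in l_R(a)$, and because $a \in N(R)$ and $R$ is LNZS, $l_R(a)$ is an ideal of $R$; in particular $(b + m \cdot 1_R)\,c\,a = 0$ for every $c \in R$, i.e., $bca + mca = 0$. For an arbitrary $(c, k) \in T$, the Dorroh product expands to
\[
(b, m)(c, k)(a, 0) \;=\; (bca + mca + kba + mka,\; 0),
\]
and both $bca + mca$ and $kba + mka = k(ba + ma)$ vanish, so $(b, m)(c, k) \in l_{T}((a, 0))$. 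Thus $l_{T}((a, 0))$ is closed under right multiplication, and since it is a left annihilator it is automatically closed under left multiplication.

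The main obstacle is essentially notational: once one notices the rewriting $ba + ma = (b + m \cdot 1_R)\,a$, the LNZS hypothesis on $R$ applies directly and the rest is a mechanical Dorroh computation. The only subtle point to watch is that one must verify closure under right multiplication, left closure being automatic for any left annihilator.
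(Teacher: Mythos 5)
Your proof is correct and follows essentially the same route as the paper's: identify $N(R\bigoplus_D\mathbb{Z})$ as $N(R)\times\{0\}$, rewrite the annihilator condition as $(b+m)a=0$ in $R$, and use that $l_R(a)$ is an ideal to kill every term of the Dorroh product $(b,m)(c,k)(a,0)$. No gaps.
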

         \begin{proof}
         Let $(a,z)\in N(R\bigoplus_D\mathbb{Z})$. Clearly, $a\in N(R)$ and $z=0$. Suppose $(s,m)\in l(a,0)$. Then $((s+m)a,0)=(0,0)$, that is, $(s+m)\in l(a)$. Let $(r,n)$ be an arbitrary element of $R\bigoplus_D\mathbb{Z}$. Now, $(s,m)(r,n)(a,0)=(s,m)(ra+na,0)=((s+m)ra+(s+m)na,0)$. Since $l(a)$ is an ideal of $R$ and  $(s+m)\in l(a)$, so  $(s,m)(r,n)(a,0)=(0,0)$. Hence $l(a,0)$ is an ideal.  
         \end{proof}
         
      For an endomorphism $\alpha$ of $R$, $R[x;\alpha]$ denotes the  \textit{skew polynomial} ring over $R$ whose elements are polynomials $\sum\limits_{i=0}^{n}w_ix^i,w_i\in R$, where addition is defined as  usual and the multiplication is defined by the law $xw=\alpha (w)x$ for any $w\in R$.\\
    The following examples show that the polynomial ring over an LNZS ring need not be LNZS, and so is the case of the skew polynomial ring over an LNZS ring.
       \begin{example}: 
       \begin{enumerate}
      
       \item Take $\mathbb{Z}_2$ as the field of integers modulo 2 and let $A=\mathbb{Z}_2[a_0,a_1,a_2,b_0,b_1,b_2,c]$ be the free algebra of polynomials with zero constant terms in non-commuting indeterminates $a_0,a_1,a_2,b_0,b_1,b_2$ and $c$ over $\mathbb{Z}_2.$ Take an ideal $I$ of the ring $\mathbb{Z}_2+A$ generated by $a_0b_0, a_0b_1+a_1b_0, a_0b_2+a_1b_1+a_2b_0, a_1b_2+a_2b_1, a_2b_2,a_0rb_0, a_2rb_2, b_0a_0, b_0a_1+b_1a_0, b_0a_2+b_1a_1+b_2a_0, b_1a_2+b_2a_1,b_2a_2,b_0ra_0,b_2ra_2, (a_0+a_1+a_2)r(b_0+b_1+b_2),(b_0+b_1+b_2)r(a_0+a_1+a_2) ~and~ r_1r_2r_3r_4$ where $r,r_1,r_2,r_3,r_4\in A$. Take $R=(\mathbb{Z}_2+A)/I$. Then we have $R[x]\cong (\mathbb{Z}_2+A)[x]/I[x]$. By \cite{ext}, $R$ is reversible and hence LNZS.  
       Observe that $\overline{(a_0+a_1x+a_2x^2)}\in N(R)$ and $(b_0+b_1x+b_2x^2)(a_0+a_1x+a_2x^2)\in I[x]$. But $(b_0+b_1x+b_2x^2)c(a_0+a_1x+a_2x^2)\notin I[x]$, since $b_0ca_1+b_1ca_0\notin I$. Hence $l(\overline{(a_0+a_1x+a_2x^2)})$ is not an ideal. Therefore, $R$ is not LNZS.
   \item  Let $K$ be a division ring and $R=K\bigoplus K$ with componentwise multiplication. Observe that $R$ is reduced, and so $R$ is LNZS. Define $\Psi: R\rightarrow R$ via  $\Psi(h,w)=(w,h)$. Then $\Psi$ is an automorphism of $R$. Let $f(x)=(1,0)x\in R[x;\Psi]$. Observe that $f(x)^2=0$ but $f(x)xf(x)\neq 0$, that is, $l(f(x))$ is not an ideal. Hence $R[x;\Psi]$ is not LNZS.    
        
       \end{enumerate} 
      \end{example} 
       The proof of the following lemma is trivial.
           	\begin{lemma}\label{loc}
           	Let $R$ be a ring and $\Delta $ be a multiplicatively closed subset of $R$ consisting of central non-zero divisors. For any $u^{-1}a \in  \Delta ^{-1} R$, $l(u^{-1}a)$ is an ideal if and only if $l(a)$ is an ideal.
           	\end{lemma}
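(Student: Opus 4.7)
The plan is to reduce everything to the identification
\[
l_{\Delta^{-1}R}(u^{-1}a) \;=\; \{\,v^{-1}b \in \Delta^{-1}R : b\in l_R(a)\,\},
\]
which follows immediately from the hypothesis that $\Delta$ consists of central non-zero divisors. Indeed, $v^{-1}b \cdot u^{-1}a = (vu)^{-1}ba$ (using centrality), and since $vu$ is a non-zero divisor, this product equals $0$ in $\Delta^{-1}R$ if and only if $ba = 0$ in $R$. This is the only genuine content in the lemma; the rest is bookkeeping.

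Since left annihilators are automatically left ideals in both rings, the only thing to check in either direction is closure under right multiplication. For the forward direction, I would assume $l(u^{-1}a)$ is an ideal of $\Delta^{-1}R$ and pick $b \in l_R(a)$ and $r \in R$. Embedding via $b = 1^{-1}b$ and $r = 1^{-1}r$, the identification gives $b \in l(u^{-1}a)$; by the ideal hypothesis, $br = 1^{-1}(br) \in l(u^{-1}a)$; and applying the identification again yields $br \in l_R(a)$.

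For the converse, I would assume $l_R(a)$ is an ideal of $R$ and take $v^{-1}b \in l(u^{-1}a)$ and $w^{-1}c \in \Delta^{-1}R$. The identification gives $b \in l_R(a)$, so $bc \in l_R(a)$ by the ideal hypothesis, and then $(v^{-1}b)(w^{-1}c) = (vw)^{-1}(bc) \in l(u^{-1}a)$ by the identification once more. Closure under addition is automatic, so $l(u^{-1}a)$ is an ideal.

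I do not expect any real obstacle here; the only subtlety is to use centrality of $\Delta$ at the right spot, namely when rewriting $v^{-1}b \cdot u^{-1}a$ as $(vu)^{-1}ba$ so that the non-zero divisor can be cancelled. This is precisely why the lemma is labelled trivial by the authors.
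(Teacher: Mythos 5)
Your proof is correct: the identification $l_{\Delta^{-1}R}(u^{-1}a)=\{v^{-1}b : b\in l_R(a)\}$, justified by centrality of $\Delta$ and the fact that its elements are non-zero divisors (so $(vu)^{-1}(ba)=0$ iff $ba=0$), together with the routine closure checks, is exactly the argument the authors have in mind. The paper omits the proof entirely, stating only that it is trivial, so your write-up simply supplies the intended details.
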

           	
           	\begin{prop}
           	Let $R$ be a ring and $\Delta$ be a multiplicatively closed subset of $R$ consisting of non-zero divisors. Then $R$ is $LNZS$ if and only if $\Delta ^{-1} R$ is $LNZS$.
           	\end{prop}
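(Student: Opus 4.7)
The plan is to deduce this proposition almost immediately from Lemma~\ref{loc}, since that lemma already transports the ideal property of left annihilators back and forth across the localization. What remains, therefore, is only to check that the nilpotency hypothesis transfers faithfully between $R$ and $\Delta^{-1}R$, so that when we invoke Lemma~\ref{loc} we are annihilating the right kind of element.

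For the nilpotent correspondence, I would first note that because every element of $\Delta$ is central in $R$, its image in $\Delta^{-1}R$ remains central (multiplying $ru=ur$ by $u^{-1}$ on both sides, then extending). Hence for any $u^{-1}a\in\Delta^{-1}R$ one has $(u^{-1}a)^n=u^{-n}a^n$. Consequently $u^{-1}a\in N(\Delta^{-1}R)$ forces $u^{-n}a^n=0$ in $\Delta^{-1}R$, and clearing denominators gives $v a^n=0$ for some $v\in\Delta$; since $v$ is not a zero divisor, $a^n=0$, so $a\in N(R)$. Conversely, if $a\in N(R)$ then $a^n=0$ immediately yields $(u^{-1}a)^n=u^{-n}a^n=0$. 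Thus $a\in N(R)$ if and only if $u^{-1}a\in N(\Delta^{-1}R)$.

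Given this, both directions are one-liners. If $R$ is LNZS and $u^{-1}a\in N(\Delta^{-1}R)$, then $a\in N(R)$, so $l(a)$ is an ideal by hypothesis, hence $l(u^{-1}a)$ is an ideal by Lemma~\ref{loc}; this shows $\Delta^{-1}R$ is LNZS. Conversely, if $\Delta^{-1}R$ is LNZS and $a\in N(R)$, then $a/1=1^{-1}a\in N(\Delta^{-1}R)$, so $l(1^{-1}a)$ is an ideal of $\Delta^{-1}R$, and Lemma~\ref{loc} again gives that $l(a)$ is an ideal of $R$. There is no real obstacle here; the only subtlety worth articulating explicitly in the write-up is the bidirectional transfer of nilpotency, which is where the centrality of $\Delta$ and the non-zero-divisor assumption both get used.
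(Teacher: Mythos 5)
Your proposal is correct and follows exactly the paper's route: the paper also reduces the statement to Lemma~\ref{loc} after noting (without proof) that $u^{-1}a\in N(\Delta^{-1}R)$ if and only if $a\in N(R)$, which is precisely the nilpotency transfer you spell out. Your write-up merely makes explicit the centrality and non-zero-divisor arguments that the paper dismisses as obvious.
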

           	\begin{proof}
           	Obviously, $u^{-1}a\in N(\Delta^{-1} R)$ if and only if $a\in N(R)$. By Lemma \ref{loc}, $R$ is LNZS if and only if so is $\Delta^{-1}R$. 
           	\end{proof}
       	\begin{cor}
            	$R[x]$ is $LNZS$ if and only if $R[x,x^{-1}]$ is so.
            	\end{cor}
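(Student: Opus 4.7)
The plan is to realize the Laurent polynomial ring $R[x,x^{-1}]$ as a localization of $R[x]$ and then invoke the preceding proposition directly. Set $\Delta = \{x^n : n \geq 0\} \subseteq R[x]$.

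First I would verify that $\Delta$ satisfies the hypotheses of the preceding proposition: it is closed under multiplication by construction, each $x^n$ is central in the ordinary polynomial ring $R[x]$ (since $x$ commutes with every coefficient), and each $x^n$ is a non-zero-divisor in $R[x]$ (if $x^n f(x) = 0$, comparing coefficients forces $f(x) = 0$).

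Next I would identify $\Delta^{-1} R[x]$ with $R[x,x^{-1}]$ via the standard ring isomorphism sending a fraction $x^{-n} f(x)$ to the corresponding Laurent polynomial. Applying the preceding proposition to the pair $(R[x], \Delta)$ then yields the desired equivalence: $R[x]$ is LNZS if and only if $\Delta^{-1} R[x] \cong R[x,x^{-1}]$ is LNZS.

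No serious obstacle arises; the entire argument is a routine application of the preceding proposition, and the only verifications needed are the elementary facts that powers of $x$ are central non-zero-divisors in $R[x]$ and that the localization at $\Delta$ recovers the Laurent polynomial ring.
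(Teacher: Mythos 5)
Your proposal is correct and follows exactly the paper's own route: the paper likewise writes $R[x,x^{-1}]=\Delta^{-1}R[x]$ with $\Delta=\{1,x,x^2,\dots\}$ and invokes the preceding proposition. Your version merely spells out the routine verifications (centrality and non-zero-divisor property of the powers of $x$) that the paper leaves implicit.
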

            	
            	\begin{proof}
            	 $R[x,x^{-1}]=\Delta ^{-1}R[x]$, where $\Delta =\{1,x,x^2,...\}$. Hence the result follows.
            	\end{proof}
            	 
             $R$ is said to satisfy the $\alpha$-condition (\cite{weaklysemi}) for an endomorphism $\alpha$ of $R$ in case $ab=0$ if and only if $a\alpha(b)=0$ where $a,b\in R$.
             
             \quad For ease of reference, we state the following lemma.
             
              \begin{lemma}(\cite[Lemma 3.1]{wsem})\label{ac}
             Let $R$ be a ring which statisfies $\alpha$-condition for an endomorphism $\alpha$ of $R$. Then $a_1a_2...a_n=0\iff \alpha^{k_1}(a_1)\alpha^{k_2}(a_2)...\alpha^{k_n}(a_n)=0$, where $k_1,~k_2,...,~k_n$ are arbitrary non negative integers and $a_1,~a_2,...,a_n$ are arbitrary elements in $R$.
             \end{lemma}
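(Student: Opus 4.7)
The plan is to reduce the $n$-factor statement to a ``single-insertion'' principle: for any $p, a, q \in R$,
\[
paq = 0 \ \Longleftrightarrow\ p\alpha(a)q = 0.
\]
Once this is in hand, the full lemma follows by applying the principle one position at a time. The main observation is that the principle requires nothing beyond the $\alpha$-condition itself, used twice.

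To establish the principle, I would first regroup $paq$ as $p \cdot (aq)$ and invoke the $\alpha$-condition with $x = p$, $y = aq$: this gives $p(aq) = 0 \iff p\alpha(aq) = 0$. Since $\alpha$ is a ring endomorphism, $\alpha(aq) = \alpha(a)\alpha(q)$. Now regroup as $[p\alpha(a)] \cdot \alpha(q) = 0$ and apply the $\alpha$-condition again, but in the reverse direction, with $x = p\alpha(a)$ and $y = q$, obtaining $[p\alpha(a)]q = 0$. Chaining the two equivalences yields the principle, and iterating $k$ times gives $paq = 0 \iff p\alpha^{k}(a)q = 0$ for any $k \geq 0$.

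To conclude, I would apply this iterated form in succession. Starting from $a_1 a_2 \cdots a_n = 0$, take $p = 1$, $a = a_1$, $q = a_2 \cdots a_n$ and replace the first factor by $\alpha^{k_1}(a_1)$; then take $p = \alpha^{k_1}(a_1)$, $a = a_2$, $q = a_3 \cdots a_n$ and do the same for the second factor; continue until the last factor has been converted to $\alpha^{k_n}(a_n)$. Composing the chain of equivalences yields the desired biconditional, handling the case $n=1$ as well (take $p = q = 1$).

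There is essentially no hard obstacle; the only subtlety worth flagging is that the two uses of the $\alpha$-condition in the single-insertion principle go in \emph{opposite} directions. The first application introduces extra $\alpha$'s into the entire tail $\alpha(q)$, and the second removes them; without this cancellation, a naive left-to-right substitution would leave extraneous $\alpha$'s piling up on the right, and the argument would stall.
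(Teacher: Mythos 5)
Your argument is correct: the two opposite-direction applications of the $\alpha$-condition do chain together to give the single-insertion principle $paq=0 \iff p\alpha(a)q=0$, and iterating it position by position (with the already-converted prefix absorbed into $p$ and the unconverted tail into $q$) yields the full biconditional. The paper states this lemma without proof, quoting it from the cited reference, so there is nothing to compare against; your self-contained argument is the standard one and is complete.
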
 
     \begin{prop}\label{nl}
        Let $R$ be an $LNZS$ ring and satisfies $\alpha$-condition for an endomorphism $\alpha$ of $R$. Then  $N(R)[x;\alpha]\subseteq N(R[x;\alpha])$.
        \end{prop}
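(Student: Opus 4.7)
The plan is to show that every $f(x)=\sum_{i=0}^n a_i x^i$ with $a_i\in N(R)$ is nilpotent in $R[x;\alpha]$. Since $R$ is LNZS, Proposition \ref{ni} gives that $N(R)$ is an ideal of $R$; let $t_i$ be the nilpotency index of $a_i$, set $T=\max_i t_i$, and choose $N=(n+1)T$. Expanding $f(x)^N$ using the rewrite rule $xa=\alpha(a)x$, each monomial indexed by a sequence $(i_1,\dots,i_N)\in\{0,1,\dots,n\}^N$ takes the form
\[
a_{i_1}\alpha^{i_1}(a_{i_2})\alpha^{i_1+i_2}(a_{i_3})\cdots \alpha^{i_1+\cdots+i_{N-1}}(a_{i_N})\,x^{i_1+\cdots+i_N}.
\]
By Lemma \ref{ac}, this coefficient vanishes in $R$ iff the ``untwisted'' product $a_{i_1}a_{i_2}\cdots a_{i_N}$ is $0$, so the problem reduces to showing that every length-$N$ word in the $a_i$'s is zero.

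The combinatorial workhorse I would extract is the following key lemma: if $b\in N(R)$ with $b^k=0$, then $b s_1 b s_2\cdots s_{k-1} b = 0$ for any $s_1,\dots,s_{k-1}\in R$. I would prove it by a right-to-left insertion that invokes LNZS at each step. From $b^{k-1}\cdot b=0$ we get $b^{k-1}\in l(b)$, and since $b\in N(R)$ the set $l(b)$ is an ideal, so $b^{k-1}s_{k-1}b=0$. Rewriting this as $b^{k-2}\cdot(b s_{k-1}b)=0$ and noting that $b s_{k-1}b\in N(R)$ (because $N(R)$ is an ideal), LNZS applied to $l(b s_{k-1}b)$ absorbs $s_{k-2}$ and yields $b^{k-2}s_{k-2}b s_{k-1}b=0$. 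Iterating this insertion produces $b s_1 b s_2\cdots s_{k-1}b=0$.

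With this lemma in hand, the proof concludes by pigeonhole. Among $i_1,\dots,i_N$ taking values in $\{0,\dots,n\}$, some $j$ occurs at least $T\ge t_j$ times; focusing on the first $t_j$ occurrences of $a_j$ in $a_{i_1}\cdots a_{i_N}$ exhibits a contiguous sub-factor of the form $a_j U_1 a_j U_2\cdots U_{t_j-1} a_j$, where each $U_l$ is the product of the intervening $a_{i_m}$'s. By the key lemma (applied with $b=a_j$, $k=t_j$, $s_l=U_l$) this sub-factor, and hence the entire word $a_{i_1}\cdots a_{i_N}$, is $0$. Thus every monomial in $f(x)^N$ vanishes, giving $f(x)^N=0$ and $f(x)\in N(R[x;\alpha])$.

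The main obstacle is the bookkeeping inside the key lemma: at each iteration one must verify that the ``right tail'' around which LNZS is invoked (elements like $b s_{k-1}b$, then $b s_{k-2}b s_{k-1}b$, and so on) actually stays in $N(R)$, since LNZS only grants that $l(a)$ is an ideal when $a\in N(R)$. This is precisely where Proposition \ref{ni} carries the weight, guaranteeing closure of $N(R)$ under the multiplications and additions generated by the $a_i$'s and arbitrary elements of $R$.
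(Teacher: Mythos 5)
Your proof is correct and follows essentially the same route as the paper: expand a suitable power of $f(x)$, use the pigeonhole principle to find a coefficient $a_t$ repeated beyond its nilpotency index, kill the untwisted word by iterated LNZS insertions, and pass between the twisted and untwisted products via Lemma \ref{ac}. The only difference is presentational: your ``key lemma'' spells out the right-to-left insertion argument (and the role of Proposition \ref{ni} in keeping the accumulating right tails inside $N(R)$) that the paper compresses into the single assertion $b_0a_tb_1a_t\cdots b_{s-1}a_tb_s=0$.
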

        \begin{proof}
        Let $f(x)=a_0+a_1x+a_2x^2+...+a_nx^n\in N(R)[x;\alpha]$. Then for each $i$, there exists a positive integer $m_i$ satisfying $a_i^{m_i}=0$. Take $k=m_0+m_1+m_2+...+m_n+1$. Then, $(f(x))^k=\sum\limits_{l=0}^{kn}\left(\sum\limits_{i_1+...+i_k=l}c_l\right)x^l$, where   
        $c_l=a_{i_1}\alpha^{i_1}(a_{i_2})\alpha^{i_1+i_2}(a_{i_3})...\alpha^{i_1+i_2+...+i_{k-1}}(a_{i_k})$ and  $a_{i_1},a_{i_2},a_{i_3},...,a_{i_k}\in \{a_0,a_1,a_2,...,a_n \}$.
          There exists $a_t \in \{a_0,a_1,a_2,...,a_n \}$ such that $a_t$ appears more than $m_t$ in the expression of $c_l$ as given above. Thus, we may assume that $a_t$ appears $s>m_t$ times in $c_l$. So we can rewrite $c_l$ as; \\
        
        $c_l=b_0\alpha^{j_1}(a_t)b_1\alpha^{j_1+j_2}(a_t)...b_{s-1}\alpha^{j_1+...+j_s}(a_t)b_s$,
       
        where $b_i\in R$ for each $i$ and $j_1,~j_2,...,~j_s$ are non negative integers. Clearly $a_t^s=0$. Since $R$ is LNZS, $b_0a_tb_1a_t...b_{s-1}a_tb_s=0$. By Lemma \ref{ac},\\  $c_l=b_0\alpha^{j_1}(a_{t})b_1\alpha^{j_1+j_2}(a_{t})...b_{s-1}\alpha^{j_1+j_2+...+j_{s}}(a_{t})b_s=0$. This implies $(f(x))^k=0$ and hence $N(R)[x;\alpha]\subseteq N(R[x;\alpha])$.
        \end{proof}
        The following lemma is established in the proof of \cite[Lemma 3.4]{n2}.
        \begin{lemma}\label{nr}
          Let $R$ be an $NI$ ring and satisfies $\alpha$-condition for an endomorphism $\alpha$ of $R$. Then  $N(R[x;\alpha])\subseteq N(R)[x;\alpha]$.
          \end{lemma}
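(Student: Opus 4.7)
The plan is to pass to the reduced quotient $\bar R := R/N(R)$ (well-defined since $R$ is $NI$) and to show that the induced skew polynomial ring $\bar R[x;\bar\alpha]$ is reduced; then any $f\in N(R[x;\alpha])$ must project to zero in it, forcing each coefficient of $f$ into $N(R)$. To make the reduction work, I would first note that $\alpha(N(R))\subseteq N(R)$ (since $a^m=0$ implies $\alpha(a)^m=\alpha(a^m)=0$), so $\alpha$ descends to a ring endomorphism $\bar\alpha$ on $\bar R$, and there is an induced ring surjection $\pi: R[x;\alpha]\to \bar R[x;\bar\alpha]$, $\sum a_ix^i\mapsto \sum\bar a_i x^i$ (multiplicative because the defining relation $xa-\alpha(a)x$ descends correctly).

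Next, I would verify that $\bar R$ inherits the $\bar\alpha$-condition. If $\bar a\bar b=0$, then $(ab)^m=0$ for some $m$; writing this as the length-$2m$ alternating product $a\cdot b\cdot a\cdot b\cdots a\cdot b$ and applying Lemma \ref{ac} with $\alpha$ inserted only in the $b$-slots yields $(a\alpha(b))^m=0$, so $a\alpha(b)\in N(R)$ and $\bar a\bar\alpha(\bar b)=0$; the converse is symmetric. Because $\bar R$ is reduced and satisfies the $\bar\alpha$-condition, $\bar\alpha$ is rigid on $\bar R$: $\bar a\bar\alpha(\bar a)=0 \iff \bar a^2=0 \Rightarrow \bar a=0$. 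I would then show $\bar R[x;\bar\alpha]$ is reduced by a leading-coefficient argument: if $g(x)=\sum_{i\leq m}b_ix^i$ satisfies $g(x)^2=0$, then the top coefficient $b_m\bar\alpha^m(b_m)$ of $g(x)^2$ vanishes, so by Lemma \ref{ac} applied in $\bar R$ we get $b_m^2=0$, hence $b_m=0$ by reducedness; iterating on the degree forces $g(x)=0$, which is enough to conclude $\bar R[x;\bar\alpha]$ has no nonzero nilpotents (a ring in which $g^2=0$ implies $g=0$ is reduced).

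With $\bar R[x;\bar\alpha]$ reduced, any $f\in N(R[x;\alpha])$ satisfies $\pi(f)\in N(\bar R[x;\bar\alpha])=\{0\}$, so every coefficient of $f$ lies in $N(R)$, giving $f\in N(R)[x;\alpha]$. The main obstacle I foresee is the second step, namely certifying that the $\bar\alpha$-condition descends to the quotient; this is where the NI hypothesis on $R$ is essential and where Lemma \ref{ac} has to be applied delicately to an alternating product. Once this is in hand, the rigidity of $\bar\alpha$ on the reduced ring $\bar R$ and the leading-coefficient peeling complete the proof in a standard fashion.
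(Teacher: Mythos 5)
Your argument is correct. The paper does not prove Lemma \ref{nr} itself but defers to the proof of \cite[Lemma 3.4]{n2}, and your route --- passing to the reduced quotient $\bar{R}=R/N(R)$, checking that the $\alpha$-condition descends to $\bar{\alpha}$, and showing $\bar{R}[x;\bar{\alpha}]$ is reduced by peeling off leading coefficients --- is essentially that standard argument. The one delicate point, that $\bar{a}\bar{b}=0$ iff $\bar{a}\bar{\alpha}(\bar{b})=0$ in $\bar{R}$, you handle correctly by applying Lemma \ref{ac} to the length-$2m$ alternating expansion of $(ab)^m=0$ with $\alpha$ inserted only in the $b$-slots, and the remaining steps (the coefficient of $x^{2m}$ in $g^2$ being $b_m\bar{\alpha}^m(b_m)$, rigidity from Lemma \ref{ac} plus reducedness, and the fact that a ring with no nonzero square-zero elements is reduced) are all sound.
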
 
      Proposition \ref{ni}, Proposition \ref{nl} and Lemma \ref{nr} together yield the following theorem.
       \begin{thm}\label{nilr}
          Let $R$ be an $LNZS$ ring and satisfies $\alpha$-condition for an endomorphism $\alpha$ of $R$. Then  $N(R)[x;\alpha]=N(R[x;\alpha])$.
       
       \end{thm}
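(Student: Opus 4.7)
The proof is essentially a bookkeeping step: the equality $N(R)[x;\alpha]=N(R[x;\alpha])$ splits into two inclusions, and each inclusion is already available from the preceding results, provided we supply the right hypothesis at each step.

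For the inclusion $N(R)[x;\alpha]\subseteq N(R[x;\alpha])$, I would simply cite Proposition \ref{nl}, which is stated precisely under the running assumptions (LNZS plus $\alpha$-condition). No additional argument is needed on this side, since Proposition \ref{nl} already does the full combinatorial work of expanding $f(x)^k$ and using the LNZS property together with Lemma \ref{ac} to kill each term $c_l$.

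For the reverse inclusion $N(R[x;\alpha])\subseteq N(R)[x;\alpha]$, the only issue is that Lemma \ref{nr} requires $R$ to be $NI$, not LNZS. The bridge is Proposition \ref{ni}, which says LNZS rings are $NI$. So I would first invoke Proposition \ref{ni} to upgrade the LNZS hypothesis on $R$ to the $NI$ hypothesis needed by Lemma \ref{nr}, and then apply Lemma \ref{nr} (whose $\alpha$-condition hypothesis is already part of our setup) to conclude $N(R[x;\alpha])\subseteq N(R)[x;\alpha]$.

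There is no genuine obstacle here; the statement is packaged exactly so that the two inclusions match the hypotheses of Proposition \ref{nl} and Lemma \ref{nr} respectively, with Proposition \ref{ni} serving as the connector. The only thing to be careful about is making explicit that the $\alpha$-condition is preserved (it is, since it is a hypothesis on $R$ and $\alpha$ and does not change) and that LNZS plus $\alpha$-condition suffices to give both halves. Combining these three cited results completes the proof in a single short paragraph.
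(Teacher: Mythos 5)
Your proposal is correct and matches the paper exactly: the paper derives this theorem by combining Proposition \ref{ni}, Proposition \ref{nl}, and Lemma \ref{nr} in precisely the way you describe. Nothing further is needed.
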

    
    According to \cite{n2}, a ring $R$ is called $\alpha$-skew Armendariz for an endomorphism $\alpha$ of $R$ if for any $f(x)=\sum\limits_{i=0}^{n}w_ix^i, g(x)=\sum\limits_{j=0}^{m}h_jx^j\in R[x;\alpha]$ whenever $f(x)g(x)=0$ then $a_i\alpha^i(b_j)=0$ for all $i$ and $j$.
         
    \begin{thm}
    Let $R$ be a ring satisfying $\alpha$-condition for an endomorphism $\alpha$ of $R$. If $R$ is $\alpha$-skew Armendariz, then $R$ is $LNZS$ if and only if $R[x;\alpha]$ is $LNZS$.
    \end{thm}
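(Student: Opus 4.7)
The plan is to reduce everything to coefficient-level equations in $R$ using the two hypotheses: the $\alpha$-skew Armendariz property turns vanishing of a product of polynomials into vanishing of pairwise coefficient products (modulo $\alpha^j$), while the $\alpha$-condition (via Lemma~\ref{ac}) lets one strip and reattach the powers of $\alpha$. Combined with Theorem~\ref{nilr}, which identifies $N(R[x;\alpha])$ with $N(R)[x;\alpha]$, this should make the LNZS property transfer in both directions almost mechanically.

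The reverse implication is immediate: if $R[x;\alpha]$ is LNZS and $a\in N(R)$, $b\in l(a)$, viewing $a$ and $b$ as constant polynomials one has $a\in N(R[x;\alpha])$ and $b\in l(a)$ there, so $bra=0$ in $R[x;\alpha]$ (hence in $R$) for every constant polynomial $r\in R$, i.e.\ $R$ is LNZS. For the forward direction, assume $R$ is LNZS and take $f(x)=\sum_{i=0}^n a_ix^i\in N(R[x;\alpha])$ together with $g(x)=\sum_{j=0}^m b_jx^j\in l(f(x))$. By Theorem~\ref{nilr} each $a_i\in N(R)$. Since $R$ is $\alpha$-skew Armendariz and $g(x)f(x)=0$, one gets $b_j\alpha^j(a_i)=0$ for all $i,j$, and the $\alpha$-condition upgrades this to $b_ja_i=0$. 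Because $a_i\in N(R)$ and $R$ is LNZS, $l(a_i)$ is an ideal of $R$, so
\[
b_j\,c\,a_i=0\quad \text{for every } c\in R\text{ and all } i,j.
\]

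The final step is to verify $g(x)h(x)f(x)=0$ for arbitrary $h(x)=\sum_k c_kx^k$. Expanding with the skew rule $xw=\alpha(w)x$ gives
\[
g(x)h(x)f(x)=\sum_{N\ge 0}\Bigl(\sum_{j+k+i=N} b_j\,\alpha^j(c_k)\,\alpha^{j+k}(a_i)\Bigr)x^N,
\]
so it suffices to show each summand $b_j\alpha^j(c_k)\alpha^{j+k}(a_i)$ vanishes. Applying Lemma~\ref{ac} with exponents $0,j,j+k$ reduces this to $b_jc_ka_i=0$, which we already have. Thus $l(f(x))$ is a right ideal of $R[x;\alpha]$, and being a left ideal by definition it is an ideal; hence $R[x;\alpha]$ is LNZS.

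The main thing to be careful about is the bookkeeping of the $\alpha$-twists in the triple product $g(x)h(x)f(x)$: once that expansion is written correctly, Lemma~\ref{ac} does the real work of allowing us to pretend $\alpha$ is not there. The $\alpha$-skew Armendariz hypothesis is essential precisely at the step where we pass from $g(x)f(x)=0$ to the coefficient equations $b_j a_i=0$; without it, the LNZS hypothesis on $R$ could not be invoked coefficient-wise. No additional structural obstacle is anticipated.
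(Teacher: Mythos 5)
Your proposal is correct and follows essentially the same route as the paper's proof: use the $\alpha$-skew Armendariz property to reduce $g(x)f(x)=0$ to coefficient equations, strip the $\alpha$-twists via Lemma~\ref{ac}, invoke Theorem~\ref{nilr} to see the coefficients of the nilpotent polynomial are nilpotent so that LNZS of $R$ applies, and then reattach the twists with Lemma~\ref{ac} to kill every term of the triple product. The only cosmetic differences are that the paper names the nilpotent polynomial $g$ rather than $f$, and disposes of the easy direction by citing closure of LNZS under subrings instead of your explicit constant-polynomial argument.
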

    \begin{proof}
    We prove the  necessary part only while the other part follows from the closedness of LNZS rings under subrings. Let  $g(x)=b_0+b_1x+b_2x^2+...+b_mx^m\in N(R[x;\alpha])$ and $f(x)=a_0+a_1x+a_2x^2+...+a_nx^n\in l(g(x))$. Then $f(x)g(x)=0$. Since $R$ is $\alpha$-skew Armendariz, $a_i\alpha^i(b_j)=0$ for all $i,j$. By Lemma \ref{ac}, $a_ib_j=0$ for all $i,j$. Let $h(x)=c_0+c_1x+c_2x^2+...+c_px^p\in R[x;\alpha]$. By Theorem \ref{nilr}, $b_j\in N(R)$ for all $j$, so $l(b_j)$ is an ideal. Therefore, $a_ic_lb_j=0$ for all $l=0,1,...,p$. It follows from Lemma \ref{ac} that $a_i\alpha^i(c_l)\alpha^{i+l}(b_j)=0$. Hence $f(x)h(x)g(x)=0$, that is, $l(g(x))$ is an ideal. 
    \end{proof} 
     According to Rege and Chhawchharia (\cite{mb}), a ring $R$ is called \textit{Armendariz}  if  $f(x)=\sum\limits_{i=0}^{n}w_ix^i, g(x)=\sum\limits_{j=0}^{m}h_jx^j \in R[x]$ satisfy $f(x)g(x)=0$, then $w_ih_j=0$ for each $i,j$.
                            
    \begin{cor}
         For an Armendariz ring $R$, the following are equivalent:
            \begin{enumerate}
            \item $R$ is $LNZS$.
            \item $R[x]$ is $LNZS$.
            \item $R[x,x^{-1}]$ is $LNZS$.
            \end{enumerate}
         \end{cor}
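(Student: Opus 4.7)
The plan is to specialize the preceding theorem about skew polynomial rings to the identity endomorphism and then chain it with the corollary relating $R[x]$ to $R[x,x^{-1}]$.

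First I would take $\alpha=\mathrm{id}_R$ in the hypotheses of the theorem just established. The $\alpha$-condition becomes vacuous ($ab=0\iff a\cdot\mathrm{id}(b)=0$), and the notion of $\alpha$-skew Armendariz collapses to the usual Armendariz condition, because $a_i\alpha^i(b_j)=a_ib_j$ when $\alpha=\mathrm{id}$. Moreover, the skew polynomial ring $R[x;\mathrm{id}]$ is exactly the ordinary polynomial ring $R[x]$. So the preceding theorem immediately specializes to: if $R$ is Armendariz, then $R$ is LNZS if and only if $R[x]$ is LNZS. This gives the equivalence (1)$\Longleftrightarrow$(2).

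Next I would invoke the corollary already proved in the excerpt, which states that $R[x]$ is LNZS if and only if $R[x,x^{-1}]$ is LNZS. This yields (2)$\Longleftrightarrow$(3) directly, closing the cycle of equivalences.

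There is no real obstacle here, since both ingredients are fully established; the only thing to spell out is the straightforward verification that an Armendariz ring satisfies the $\alpha$-skew Armendariz condition and the $\alpha$-condition when $\alpha$ is taken to be the identity. Once that observation is recorded, the result follows by citing the two previous statements.
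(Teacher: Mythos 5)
Your proposal is correct and is exactly the intended argument: the corollary follows by specializing the preceding theorem to $\alpha=\mathrm{id}_R$ (where the $\alpha$-condition holds trivially and $\alpha$-skew Armendariz reduces to Armendariz), giving $(1)\Leftrightarrow(2)$, and then citing the earlier corollary that $R[x]$ is LNZS if and only if $R[x,x^{-1}]$ is, giving $(2)\Leftrightarrow(3)$. No gaps.
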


\end{document}